  \setlist{nosep}
  \tikzset{
biml/.tip={Glyph[glyph math command=triangleleft, glyph length=.95ex]},
bimr/.tip={Glyph[glyph math command=triangleright, glyph length=.95ex]},
}
\tikzset{
	tick/.style={postaction={
  	decorate,
    decoration={markings, mark=at position 0.5 with
    	{\draw[-] (0,.4ex) -- (0,-.4ex);}}}
  }
} 
\tikzset{
	slash/.style={postaction={
  	decorate,
    decoration={markings, mark=at position 0.5 with
    	{\draw[-] (.3ex,.3ex) -- (-.3ex,-.3ex);}}}
  }
}
\theoremstyle{definition}
\newtheorem{definitionx}{Definition}[section]
\theoremstyle{plain}
\newtheorem{theorem}[definitionx]{Theorem}
\newtheorem{proposition}[definitionx]{Proposition}
\newtheorem{corollary}[definitionx]{Corollary}
\newtheorem{lemma}[definitionx]{Lemma}
\newtheorem*{theorem*}{Theorem}
\newtheorem*{proposition*}{Proposition}
\newtheorem*{corollary*}{Corollary}
\newtheorem*{lemma*}{Lemma}
\newtheorem*{warning*}{Warning}
\newenvironment{example}
  {\pushQED{\qed}\examplex}
  {\popQED\endexamplex}
 \newenvironment{remark}
  {\pushQED{\qed}\remarkx}
  {\popQED\endremarkx}
  \newenvironment{definition}
  {\pushQED{\qed}\definitionx}
  {\popQED\enddefinitionx}
\DeclareSymbolFont{stmry}{U}{stmry}{m}{n}
\DeclareMathSymbol\fatsemi\mathop{stmry}{"23}
\DeclareFontFamily{U}{mathx}{\hyphenchar\font45}
\DeclareFontShape{U}{mathx}{m}{n}{
      <5> <6> <7> <8> <9> <10>
      <10.95> <12> <14.4> <17.28> <20.74> <24.88>
      mathx10
      }{}
\DeclareSymbolFont{mathx}{U}{mathx}{m}{n}
\DeclareMathAccent{\widecheck}{0}{mathx}{"71}
\renewcommand{\ss}{\subseteq}
\DeclareMathOperator{\ob}{Ob}
\newcommand{\Set}[1]{\text{#1}}%a named set
\newcommand{\Cat}[1]{\textbf{#1}}%a named category
\newcommand{\id}{\mathrm{id}}
\newcommand{\To}[2][]{\xrightarrow[#1]{#2}}
\newcommand{\card}{\,^{\#}}
\newcommand{\op}{^\tn{op}}
\newcommand{\tn}[1]{\textnormal{#1}}
\newcommand{\rr}{\mathbb{R}}
\newcommand{\smset}{\Cat{Set}}
\newcommand{\yon}{\mathcal{y}}
\newcommand{\poly}{\Cat{Poly}}
\newcommand{\dir}{\Set{Dir}}
\newcommand{\rect}{\Set{Rect}}
\newcommand{\polycart}{\poly^{\Cat{Cart}}}
\newcommand{\hh}{\mathcal{h}}
\newcommand{\tri}{\mathbin{\triangleleft}}
\newcommand{\R}{R}
\newcommand{\T}{T}
\newcommand{\biglens}[2]{
     \begin{bmatrix}{\vphantom{f_f^f}#2} \\ {\vphantom{f_f^f}#1} \end{bmatrix}
}
\newcommand{\littlelens}[2]{
     \begin{bsmallmatrix}{\vphantom{f}#2} \\ {\vphantom{f}#1} \end{bsmallmatrix}
}
\newcommand{\lens}[2]{
  \relax\if@display
     \biglens{#1}{#2}
  \else
     \littlelens{#1}{#2}
  \fi
}
\newcommand{\qand}{\quad\text{and}\quad}
\newcommand{\qqand}{\qquad\text{and}\qquad}
\begin{document}

\title{Polynomial Functors and Shannon Entropy}
\def\titlerunning{Polynomial Functors and Shannon Entropy}
\def\authorrunning{David I.\ Spivak}
\author{David I.\ Spivak}

\date{\vspace{-.2in}}

\maketitle

\begin{abstract}
Past work shows that one can associate a notion of Shannon entropy to a Dirichlet polynomial, regarded as an empirical distribution. Indeed, entropy can be extracted from any $d\in\dir$ by a two-step process, where the first step is a rig homomorphism out of $\dir$, the \emph{set} of Dirichlet polynomials, with rig structure given by standard addition and multiplication. In this short note, we show that this rig homomorphism can be upgraded to a rig \emph{functor}, when we replace the set of Dirichlet polynomials by the \emph{category} of ordinary (Cartesian) polynomials.

In the Cartesian case, the process has three steps. The first step is a rig functor $\polycart\to\poly$ sending a polynomial $p$ to $\dot{p}\yon$, where $\dot{p}$ is the derivative of $p$. The second is a rig functor $\poly\to\smset\times\smset\op$, sending a polynomial $q$ to the pair $(q(1),\Gamma(q))$, where $\Gamma(q)=\poly(q,\yon)$ can be interpreted as the global sections of $q$ viewed as a bundle, and $q(1)$ as its base. To make this precise we define what appears to be a new distributive monoidal structure on $\smset\times\smset\op$, which can be understood geometrically in terms of rectangles. The last step, as for Dirichlet polynomials, is simply to extract the entropy as a real number from a pair of sets $(A,B)$; it is given by $\log A-\log \sqrt[A]{B}$ and can be thought of as the log aspect ratio of the rectangle.
\end{abstract}

\section{Introduction}

In practice, a probability distribution on a set of \emph{outcomes} arises from considering finite samples. A sample consists of a set of observations, or \emph{draws}, each corresponding to one of the outcomes. For example, the following is a sample with five (5) outcomes and eight (8) draws:

\begin{equation}\label{eqn.empirical}
  \begin{tikzpicture}[scale=0.5, baseline=(pi)]
    \node at (-2,2.5) {draws};%$\dot{p}(1)\cong8\cong$};
    \begin{scope}
      \draw[rounded corners] (0,0) rectangle ++(6,5);
      \foreach \y in {1,2,3,4}
        \draw[thick,black,fill=gray] (1,\y) circle (2mm);
      \foreach \x in {2,3,4,5}
        \draw[thick,black,fill=gray] (\x,1) circle (2mm);
    \end{scope}
    \draw[thick,-Latex] (3,-0.5) to (3,-1.5);
     \node at (3.75,-1) (pi) {$\pi$};
   \node at (-2,-3) {outcomes};%$p(1)\cong5\cong$};
    \begin{scope}[shift={(0,-4)}]
      \draw[rounded corners] (0,0) rectangle ++(6,2);
      \foreach \x in {1,2,3,4,5}
        \draw[thick,black,fill=white] (\x,1) circle (2mm);
    \end{scope}
  \end{tikzpicture}
\end{equation}

This corresponds to the probability distribution $P=(\frac{1}{2},\frac{1}{8},\frac{1}{8},\frac{1}{8},\frac{1}{8})$. But the sample itself can be encoded in the form of a polynomial, namely $p\coloneqq\yon^4+4\yon$. Note that $p(1)=5$ is the number of outcomes and that $\dot{p}(1)=8$ is the number of draws, where $\dot{p}=4\yon^3+4$ is the derivative of $p$. The map $\pi$ itself is somehow inherent in $p$: one of its summands has an exponent of $4$, whereas its other four summands each have an exponent of $1$. Yet one may wonder: is this polynomial encoding really meaningful, or is it just a bizarre packaging of the sample? Our goal in this paper is to show that it is meaningful, at least when it comes to considering the Shannon entropy $H(P)$. 

The Shannon entropy of a distribution \cite{shannon1948mathematical} is a measure of how much information is transmitted when outcomes are selected according to the distribution. For example, if one repeatedly chooses an element of the 8 draws in diagram \eqref{eqn.empirical} uniformly at random but only reports the outcome, then the first outcome will show up four-times more often than any other. As we will explain, Shannon's information theory says that this distribution has entropy $H(P)=2$, i.e.\ it transmits the same amount of information as if it were a uniform distribution on only $4$ outcomes.

In this paper we will give a category-theoretic account of the Shannon entropy of the probability distribution corresponding to a sample encoded as a polynomial $p$, or more precisely a \emph{polynomial functor} $p\in\poly$. Polynomial functors are ubiquitous: they show up in type theory \cite{avigad2019data,awodey2018polynomial}, dynamical systems theory \cite{spivak2020poly,spivak2022poly}, database theory \cite{spivak2015relational,spivak2021functorial}, programming language theory \cite{bird1996algebra,abbott2003categories}, and higher category theory \cite{thanh2019sequent,shapiro2021familial}. 

The category $\poly$ of polynomial functors in one variable has an enormous amount of structure. For example, it has at least eight distinct monoidal structures, of which two will be relevant to us. One is the coproduct: given two polynomials $p,q$, we may add them to form $p+q$. In terms of samples, this operation simply takes the disjoint union of two samples: both the sets of outcomes and the sets of draws. The other is the \emph{Dirichlet product}, denoted $\otimes$. We will give the precise formula for $p\otimes q$ in Section~\ref{sec.rig}, but the idea is that it runs the two samples independently: an outcome in $p\otimes q$ is a pair consisting of an outcome from $p$ and an outcome from $q$, and a draw is also a pair consisting of a draw from $p$ and a draw from $q$.

These two operations make $\poly$ a \emph{distributive monoidal category}, because $\otimes$ distributes over $+$. The goal of this paper is to show that most of the process for taking the Shannon entropy of a sample is fully category-theoretic. Indeed, we will factor the process into three stages, the first two of which are completely categorical, and the last of which extracts a real number that will be the entropy. 

The first stage is to define a rig functor $\T\colon\polycart\to\poly$, which sends a polynomial $p$ to $\T(p)\coloneqq\dot{p}\yon$, where $\dot{p}$ is the derivative of $p$. The second stage is to define a rig functor $\R\colon\poly\to\smset\times\smset\op$, which sends a polynomial $q$ to $\R(q)\coloneqq(q(1),\Gamma(q))$, where $\Gamma(p)=\poly(p,\yon)$ can be construed as the set of global sections of $p$, viewed as a bundle. 

The fact that both $\T$ and $\R$ are rig functors means that each preserves both the coproduct and the $\otimes$-product, a surprising amount of structure. But to say this, we need to define what appears to be a novel symmetric monoidal product $\otimes$ on $\smset\times\smset\op$. It is given by
\[
	(A_1,B_1)\otimes(A_2,B_2)\coloneqq\left(A_1A_2\,,\,B_1^{A_2}B_2^{A_1}\right).
\]
This monoidal product $\otimes$ distributes over the coproduct, which is given by
\[
(A_1,B_1)+(A_2,B_2)\coloneqq\left(A_1+A_2\,,\,B_1\times B_2\right),
\]
hence making $\smset\times\smset\op$ a distributive monoidal category, and in particular a \emph{rig category}. We will explain these two rig functors $\T$ and $\R$ in Section~\ref{chap.main}. We denote their composite---the result of the first and second stages---by
\[\hh\coloneqq(\R\circ\T)\colon\polycart\to\smset\times\smset\op.\]
It contains the categorical aspect of the entropy in a given sample $p\in\polycart$.

Before we discuss the third stage, we need a bit of intuition. Namely, we can think of an object $(A,B)\in\smset\times\smset\op$ as encoding a rectangle that has length $A$ and width $\sqrt[A]{B}$. The coproduct of two rectangles is given by adding their lengths and taking the geometric mean of their widths. The $\otimes$-product of two rectangles is given by multiplying both their lengths and their widths. It is in these terms that we can understand the third and final stage, which is simply to take the \emph{log aspect ratio} (the log of the quotient of length divided by width) of a given rectangle:
\[L(A,B)=\log A-\log\sqrt[A]{B}.\]

That is, we will prove that for any polynomial $p$ with an associated probability distribution $P$, the Shannon entropy $H(P)$ can be computed by first applying the rig-functorial operation to obtain $\hh(p)\in\smset\times\smset\op$, and then by extracting the log aspect ratio:
\[
H(P)=L(\hh(p)).
\]

We will conclude by returning to our original example, after giving the full composite: the function that takes a polynomial $p$ and returns the entropy of the corresponding empirical distribution is given by
\[
  L(\hh(p))\coloneqq\log \dot{p}(1)-\frac{\log\Gamma(\dot{p}\yon)}{\dot{p}(1)}
\]
Note that $\log\sqrt[A]{B}=\frac{\log B}{A}$.

So consider again the polynomial $p=\yon^4+4\yon$, depicted in \eqref{eqn.empirical}. Then we calculate
\[
\dot{p}\yon=4\yon^4+4\yon
,\quad
\dot{p}(1)=8
,\quad
\Gamma(\dot{p}\yon)=4^4*1^4=2^8
,\qand 
L(\hh(p))=\log 8-\frac{\log 2^8}{8}=2
\]
which agrees with our former calculation: its entropy is $H(P)=L(\hh(p))=2$. 

The remainder of this note is divided into two sections: Section~\ref{chap.background} gives background on polynomial functors, including the definition of $\polycart\ss\poly$ as well as the $+$ and $\otimes$ structures. Section~\ref{chap.main} gives the main results: explaining the seemingly novel distributive monoidal structure on $\smset\times\smset\op$, providing a rig monoidal functor $\hh\colon\polycart\to\smset\times\smset\op$, showing how to extract the entropy via a partial function $L\colon\ob(\smset\times\smset\op)\to\rr$, and finally proving the main theorem: that $H(P)=L(\hh(p))$.

There have been other categorical approaches to entropy, most notably \cite{baez2011characterization}, \cite{baez2014bayesian}, \cite{leinster2021entropy}, and \cite{parzygnat2022functorial}. Our presentation here has almost nothing in common with those. 

However, this work is closely aligned with \cite{spivak2021dirichlet}. There, the authors---myself and Tim Hosgood---use Dirichlet polynomials rather than ordinary (Cartesian)%
\footnote{Ren\'{e} Decartes at least invented the notation, e.g.\ $\yon^2+3\yon+2$, for polynomials; hence we refer to them as \emph{Cartesian polynomials} when we need to distinguish them from Dirichlet polynomials.}
polynomials. At the time, we seemed to have a choice of whether to use Dirichlet or Cartesian polynomials, and the Dirichlet route seemed cleaner and more intuitive for talking about the bundles. However, we were missing a few key ideas at the time. Whereas there we only factored out from $H$ a rig homomorphism (a function) $\dir\to\rect$ to a somewhat ad hoc rig we called $\rect$, the presentation here factors out from $H$ a rig \emph{functor} $\polycart\to\smset\times\smset\op$. Thus it is a significant categorical upgrade.

\subsection*{Acknowledgments}
Thanks to Valeria de Paiva for interesting conversations, and thanks to the referees of ACT2022 for suggestions, e.g.\ leading to Remark~\ref{rem.poly_prod_monoidal}.

This material is based upon work supported by the Air Force Office of Scientific Research under award number FA9550-20-1-0348.

\section{Background on polynomial functors}\label{chap.background}

Readers familiar with the rig category $(\poly,0,+,\yon,\otimes)$ should skip to Section~\ref{sec.CT}.

%---- Section ----%
\subsection{Basics}

The main purpose of this section is to fix notation and provide a brief overview of polynomial functors in one variable. More extensive background material can be found in \cite{spivak2022poly} and \cite{kock2012polynomial}. 

\begin{definition}[Polynomial functor]\label{def.poly}
Given a set $S$, we denote the corresponding representable functor by
\[\yon^S\coloneqq\smset(S,-)\colon\smset\to\smset,\]
e.g. $\yon^S(X)\coloneqq X^S$. In particular $\yon=\yon^1$ is the identity and $\yon^0=1$ is constant singleton.

A \emph{polynomial functor} is a functor $p\colon\smset\to\smset$ that is isomorphic to a sum of representables, i.e.\ for which there exists a set $T$, a set $p[t]\in\smset$ for each $t\in T$, and an isomorphism of functors
\[
p\cong\sum_{t\in T}\yon^{p[t]}.
\]
We refer to $T$ as the set of \emph{$p$-types}, and for each type $t\in T$ we refer to $p[t]$ as the set of \emph{$p$-terms of type $t$}.%

A \emph{morphism} $\varphi\colon p\to p'$ of polynomial functors is simply a natural transformation between them. It is called \emph{cartesian} if for every map of sets $f\colon S\to S'$, the naturality square
\[
\begin{tikzcd}
  p(S)\ar[r, "p(f)"]\ar[d, "\varphi(S)"']&p(S)\ar[d, "\varphi(S')"]\\
  p'(S')\ar[r, "p'(f)"']&p'(S')\ar[ul, phantom, very near end, "\lrcorner"]
\end{tikzcd}
\]
is a pullback of sets. We denote the category of polynomial functors by $\poly$ and the wide subcategory of polynomials and cartesian maps by $\polycart\ss\poly$.\end{definition}

For any polynomial $p=\sum_{t\in T}\yon^{p[t]}$, we have a canonical isomorphism $p(1)\cong T$; hence from now on we will denote $p$ by
\begin{equation}\label{eqn.poly_notation}
p=\sum_{I\in p(1)}\yon^{p[I]}
\end{equation}
so that each $p$-types is written with an upper-case letter, e.g. $I\in p(1)$, and its terms are written with corresponding lower-case letters, e.g. $i\in p[I]$.

\begin{remark}\label{rem.positions_and_directions}
Using the Yoneda lemma, the fact that a morphism in $\poly$ is just a natural transformation, and the fact that a polynomial is a coproduct of representables, we derive
\begin{align*}
	\poly(p,q)&=
	\poly\left(\sum_{I\in p(1)}\yon^{p[I]},\sum_{J\in p(1)}\yon^{q[J]}\right)\\&\cong
	\prod_{I\in p(1)}\poly\left(\yon^{p[I]},\sum_{J\in p(1)}\yon^{q[J]}\right)\\&\cong
	\prod_{I\in p(1)}\sum_{J\in q(1)}\smset(q[J],p[I]).
\end{align*}
Thus we can understand a morphism $p\to q$ in $\poly$ to consist of two parts $(\varphi_1,\varphi^\sharp)$ as follows:
\begin{equation}\label{eqn.mapsharp}
  \varphi_1\colon p(1)\to q(1)
  \qqand
  \varphi^\sharp_I\colon q[J]\to p[I],
\end{equation}
where $J\coloneqq\varphi_1(I)$. That is, $\varphi_1$ is a function from $p$-types to $q$-types, and $\varphi^\sharp_i$ is a function on terms that \emph{depends on a choice of position $I\in p(1)$}. We refer to $\varphi_1$ as the \emph{on-types function} and to $\varphi^\sharp$ as the \emph{backwards on-terms} function. 

One can check that a map $\varphi\colon p\to q$ is cartesian iff the backwards-on-terms function $\varphi^\sharp_I$ is a bijection $p[I]\cong q[\varphi_1I]$ for each type $I\in p(1)$.
\end{remark}

\begin{example}[Types and global sections, $p(1)$ and $\Gamma(p)$]\label{ex.pos_glob}
For any polynomial $p$, we will be particularly interested in two sorts of maps: $\yon\to p$ and $p\to\yon$. The former is easy: a map $\yon\to p$ is given on types by choosing a single type $I\in p(1)$ to be the image of the unique type $!\in\yon(1)$ and given backward on terms using the unique choice of function $p[I]\to 1=\yon[!]$. Thus we have $p(1)\cong\poly(\yon,p)$.

More interesting are the maps $\gamma\colon p\to\yon$. This time $\gamma$ is trivial on types: each type $I\in p(1)$ is sent to the unique type $!\in\yon(1)$. However on terms, we need a map $\varphi^\sharp_I\colon 1\to p[I]$ for each $I$, meaning a choice of term $i\in p[I]$ for each $I\in p(1)$. In other words, writing $\Gamma(p)\coloneqq\poly(p,\yon)$, we have
\begin{equation}\label{eqn.global}
\Gamma(p)\cong\prod_{I\in p(1)}p[I].
\end{equation}
We refer to $\Gamma(p)$ as the set of \emph{global sections} of $p$, as is justified by the bundle terminology the next section.

Note that $-(1)\colon\poly\to\smset$ and $\Gamma\colon\poly\to\smset\op$ are functorial, as they are represented and corepresented by $\yon\in\poly$. We will be very interested in the functor
\begin{equation}\label{eqn.fundamental}
\R\colon\poly\to\smset\times\smset\op
\end{equation}
given by $\R(p)\coloneqq (p(1),\Gamma(p))$. In fact, $\R$ is a left adjoint, but we do not need that for this paper. In Remark~\ref{rem.geomean} we will explain that $\R(p)$ can be viewed as the \emph{rectangular aspect} of the polynomial $p$, hence the name $\R$.
\end{example}

\subsection{Derivatives and bundles}

We can understand polynomial functors in terms of bundles, using the derivative. For any polynomial $p$, its derivative $\dot{p}$ is defined as follows:
\begin{equation}\label{eqn.dotp}
\dot{p}\coloneqq\sum_{I\in p(1)}\sum_{i\in p[I]}\yon^{p[I]-\{i\}}
\end{equation}
where $p[I]-\{i\}$ denotes the set-difference. Note that $\dot{p}(1)\cong\sum_{I\in p(1)}p[I]$ is the set of all $p$-terms, and it comes with a map $\dot{p}(1)\to p(1)$ to the set of $p$-types. Often in the literature, this map of sets---which we call a bundle---is taken to be the polynomial itself. A map of polynomials $\varphi\colon p\to q$ can be written in terms of these bundles:
\[
\begin{tikzcd}
	\dot{p}(1)\ar[d]&p(1)\times_{q(1)}\dot{q}(1)\ar[l, "\varphi^\sharp"']\ar[r]\ar[d]&\dot{q}(1)\ar[d]\\
	p(1)\ar[r, equal]&p(1)\ar[r, "\varphi_1"']&q(1)\ar[ul, phantom, very near end, "\lrcorner"]
\end{tikzcd}
\]
Just as in Remark~\ref{rem.positions_and_directions}, one provides a forward map on types $\varphi_1\colon p(1)\to q(1)$, at which point one takes the pullback of that map with $\dot{q}(1)\to q(1)$, and then one provides a backward map $\varphi^\sharp\colon p(1)\times_{q(1)}\dot{q}(1)\to \dot{p}(1)$ on directions.
Again, $\varphi$ is cartesian iff $\varphi^\sharp$ is a bijection.

We write $pq=p\times q$ for the usual product of two polynomials, e.g.\ $\dot{p}\yon=\dot{p}\times\yon$.

\begin{proposition}\label{prop.dotpy}
The assignment $p\mapsto\dot{p}\yon$ is a functor $\polycart\to\polycart$.
\end{proposition}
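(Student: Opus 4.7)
The plan is to give an explicit description of $\dot p\,\yon$ in terms of positions and directions, then define the morphism assignment using the cartesian structure, then verify functoriality.

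First I would simplify: applying the definition \eqref{eqn.dotp} and distributing over $\yon = \yon^1$,
\[
\dot p\,\yon \;=\; \sum_{I\in p(1)}\sum_{i\in p[I]} \yon^{(p[I]-\{i\})+1} \;\cong\; \sum_{I\in p(1)}\sum_{i\in p[I]} \yon^{p[I]},
\]
using the evident bijection $(p[I]-\{i\})+1 \cong p[I]$ that sends the newly adjoined element to $i$. Thus the position set is $(\dot p\,\yon)(1) \cong \sum_{I\in p(1)} p[I]$ and the direction set at a position $(I,i)$ is $(\dot p\,\yon)[(I,i)] \cong p[I]$.

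Next I would define the action on morphisms. Given a cartesian map $\varphi \colon p\to q$, by Remark~\ref{rem.positions_and_directions} it is determined by an on-positions function $\varphi_1\colon p(1)\to q(1)$ together with a family of bijections $\varphi^\sharp_I \colon q[\varphi_1 I] \xrightarrow{\;\sim\;} p[I]$. I define $\dot\varphi\,\yon \colon \dot p\,\yon \to \dot q\,\yon$ by
\[
(\dot\varphi\,\yon)_1(I,i) \;\coloneqq\; \bigl(\varphi_1(I),\,(\varphi^\sharp_I)^{-1}(i)\bigr)
\qand
(\dot\varphi\,\yon)^\sharp_{(I,i)} \;\coloneqq\; \varphi^\sharp_I \colon q[\varphi_1 I] \to p[I].
\]
Since the on-directions component at each position is a bijection, the map $\dot\varphi\,\yon$ is cartesian, again by Remark~\ref{rem.positions_and_directions}.

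Finally I would check functoriality. For $\id_p$, we have $(\id_p)_1 = \id_{p(1)}$ and $(\id_p)^\sharp_I = \id_{p[I]}$, so the recipe produces $\id_{\dot p\,\yon}$. For a composite $p \xrightarrow{\varphi} q \xrightarrow{\psi} r$ of cartesian maps, the composition law gives $(\psi\varphi)_1 = \psi_1 \circ \varphi_1$ and $(\psi\varphi)^\sharp_I = \varphi^\sharp_I \circ \psi^\sharp_{\varphi_1 I}$, from which one verifies that the on-positions component $(I,i)\mapsto (\psi_1\varphi_1 I,\,(\psi^\sharp_{\varphi_1 I})^{-1}(\varphi^\sharp_I)^{-1}(i))$ and the on-directions composite $\varphi^\sharp_I \circ \psi^\sharp_{\varphi_1 I}$ agree with the recipe applied to $\psi\varphi$. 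The only real obstacle is keeping the direction of $\varphi^\sharp$ straight---it is a bijection, but it points \emph{backward}, which is why the on-positions function of $\dot\varphi\,\yon$ requires the inverse $(\varphi^\sharp_I)^{-1}$ while the on-directions function uses $\varphi^\sharp_I$ itself. With this bookkeeping in place, everything else is routine.
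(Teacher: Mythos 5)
Your proposal is correct and follows exactly the paper's route: rewrite $\dot p\,\yon\cong\sum_{I\in p(1)}\sum_{i\in p[I]}\yon^{p[I]}$ and use the bijectivity of $\varphi^\sharp_I$ to define the action on a cartesian map. The paper leaves the morphism assignment as ``obvious''; you have simply made explicit the one genuinely non-trivial detail, namely that the on-positions component must transport the marked term $i$ forward via $(\varphi^\sharp_I)^{-1}$ --- which is precisely where cartesianness is used --- and your functoriality check is routine and correct.
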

\begin{proof}
We can think of $\dot{p}\yon$ as follows:
\begin{equation}\label{eqn.derivy}
\dot{p}\yon\cong\sum_{I\in p(1)}\sum_{i\in p[I]}\yon^{p[I]}
\end{equation}
Given a cartesian map $\varphi\colon p\to q$, the bijection $\varphi^\sharp\colon q[\varphi_1(I)]\cong p[I]$ lets us define a map $\dot{p}\yon\to\dot{q}\yon$ in an obvious way. 
\end{proof}

\begin{remark}\label{rem.comonad}
In fact, the assignment $(p\mapsto\dot{p}\yon)\colon\polycart\to\polycart$ extends to a comonad on $\polycart$. The counit map $\epsilon_p\colon\dot{p}\yon\to p$ is cartesian and is given on types by $(I,i)\mapsto I$. The comultiplication $\delta_p\colon\dot{p}\yon\to\ddot{p}\yon^2+\dot{p}\yon$ is given by the coproduct inclusion.

A coalgebra for this comonad is a polynomial $p$ equipped with a map $\gamma\colon p\to\dot{p}\yon$ such that $\epsilon_p\circ\gamma=\id_p$; it is not hard to check that the other condition holds for free. Hence a coalgebra structure on $p$ can be identified with a choice a global section $p\to\yon$, i.e.\ an element $\gamma\in\Gamma(p)$. Of course the map $p\to\yon$ is not cartesian in general, so the only way it can be encoded in $\polycart$ is via this coalgebra structure. A map of coalgebras is a cartesian map $\varphi\colon p\to p'$ that commutes with the global sections: $\Gamma(\varphi)(\gamma')=\gamma$.

The above is intriguing in that $\Gamma(p)$ is a major player in the story of this paper, but we currently know of no further connection between entropy and this comonad. 
\end{remark}

\subsection{Rig monoidal structure on $\poly$}\label{sec.rig}

The category $\poly$ has coproducts $p+q$ and products $p\times q$ given by usual polynomial arithmetic. We will be more interested in the former:%
\footnote{
The only reason we introduce $\times$ for $\poly$ is to explain that the polynomial product $\dot{p}\yon$ is in fact the categorical product $\dot{p}\yon\cong\dot{p}\times\yon$.
}
coproducts constitute a symmetric monoidal product with unit $0$. A type in $p+q$ is a type in $p$ or disjointly a type in $q$, and a term of that type is as specified in $p$ or $q$, respectively.

We will also be interested in another monoidal product called \emph{Dirichlet product} and denoted $-\otimes-$; the types and terms of $p\otimes q$ are given by the following formula:
\begin{equation}\label{eqn.dir_formula}
  \left(\sum_{I\in p(1)}\yon^{p[I]}\right)\otimes
  \left(\sum_{J\in q(1)}\yon^{q[J]}\right)\coloneqq
  \sum_{(I,J)\in p(1)\times q(1)}\yon^{p[I]\times q[J]}.
\end{equation}
This gives a symmetric monoidal structure $(\poly,\yon,\otimes)$. A type in $p\otimes q$ is just a pair of types $(I,J)\in p(1)\times q(1)$ and a term of it is just a pair of terms $(i,j)\in p[I]\times q[J]$.

In the language of bundles, $p+q$ and $p\otimes q$ are respectively given by
\[
\begin{tikzcd}
	\dot{p}(1)+\dot{q}(1)\ar[d]&\dot{p}(1)\times\dot{q}(1)\ar[d]\\
	p(1)+q(1)&p(1)\times q(1)
\end{tikzcd}
\]
i.e. $\dot{(p+q)}(1)\cong\dot{p}(1)+\dot{q}(1)$ and $\dot{(p\otimes q)}(1)\cong\dot{p}(1)\times\dot{q}(1)$.

The $\otimes$-structure distributes over the $+$ structure:
\[
p\otimes (q_1+q_2)\cong (p\otimes q_1)+(p\otimes q_2),
\]
thus making $(\poly,0,+,\yon,\otimes)$ a distributive monoidal category, and in particular a rig monoidal category.

\begin{remark}[Leibniz and chain rules]\label{rem.leibniz}
Some readers may be interested in the Leibniz rule and chain rule, that
\begin{align*}
	\dot{(p\times q)}&\cong\dot{p}\times q+p\times\dot{q}\\
	\dot{(p\tri q)}&\cong(\dot{p}\tri q)\times\dot{q}
\end{align*}
where $\times$ is the categorical product and $\tri$ is the composition product in $\poly$. These hold, but we will not need them in this paper.
\end{remark}

\section{Main results}\label{chap.main}

We divide this section into two parts. Section~\ref{sec.CT} is the category theory part, in which we provide what seems to be a novel symmetric monoidal structure on $\smset\times\smset\op$ and show that both $p\mapsto\dot{p}\yon$ and $q\mapsto(q(1),\Gamma(q))$ are rig functors. At the end of this section, we will have a rig functor $\hh\colon\polycart\to\smset\times\smset\op$ that does the categorical work of Shannon entropy.

Section~\ref{sec.entropy} is the finishing step, providing a function $\ob(\smset\times\smset\op)\to\rr$ and showing that when it is combined with the above, the map $H\colon\ob(\polycart)\to\rr$ sends an appropriately finite polynomial $p$ to the Shannon entropy of the empirical distribution defined by $p$.

\subsection{Categorical entropy of a polynomial}\label{sec.CT}

Below we will often denote products of sets by juxtaposition, $AB\coloneqq A\times B$. Recall the functor $p\mapsto\dot{p}\yon$ from Proposition~\ref{prop.dotpy}.

\begin{proposition}
The functor $p\mapsto\dot{p}\yon$ is a rig functor $\polycart\to\polycart$. 
\end{proposition}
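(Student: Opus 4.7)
The plan is to verify directly that $p\mapsto\dot{p}\yon$ preserves each component of the rig structure, namely the unit $0$ and coproduct $+$ of the additive symmetric monoidal structure, the unit $\yon$ and product $\otimes$ of the multiplicative symmetric monoidal structure, as well as the distributor. Since all of the relevant data is specified by index sets and exponent sets, the witnessing isomorphisms will be built by re-indexing, and all coherence checks reduce to associativity/commutativity of disjoint union and cartesian product of sets.

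First I would dispatch the additive side. From \eqref{eqn.dotp} one sees immediately that $\dot{0}=0$ and that differentiation commutes with coproduct: $\dot{(p+q)}=\dot{p}+\dot{q}$. Multiplying by $\yon$ still distributes over $+$, so the canonical iso $\dot{(p+q)}\yon\iso \dot{p}\yon+\dot{q}\yon$ is the obvious equality after unfolding $\sum_{I\in (p+q)(1)}\sum_{i\in (p+q)[I]}\yon^{(p+q)[I]}$ by cases on whether $I$ comes from $p(1)$ or $q(1)$. This is a cartesian iso, and naturality in cartesian maps $\varphi\colon p\to p'$, $\psi\colon q\to q'$ is immediate because $\varphi^\sharp$ and $\psi^\sharp$ act independently on the two summands.

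Next I would handle the multiplicative side. For the unit, $\dot{\yon}\cong 1$, so $\dot{\yon}\yon\cong\yon$ as required. For the product, using \eqref{eqn.derivy} we have
\[
\dot{p}\yon\otimes\dot{q}\yon\cong\sum_{I\in p(1)}\sum_{i\in p[I]}\sum_{J\in q(1)}\sum_{j\in q[J]}\yon^{p[I]\times q[J]},
\]
whereas from the Dirichlet formula \eqref{eqn.dir_formula} together with \eqref{eqn.derivy} applied to $p\otimes q$,
\[
\dot{(p\otimes q)}\yon\cong\sum_{(I,J)\in p(1)\times q(1)}\sum_{(i,j)\in p[I]\times q[J]}\yon^{p[I]\times q[J]}.
\]
Reindexing $\sum_I\sum_J\sum_i\sum_j=\sum_{(I,J)}\sum_{(i,j)}$ gives a canonical iso, and it is cartesian because the exponent sets on the two sides are literally the same set $p[I]\times q[J]$. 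Naturality under cartesian maps again uses that $(\varphi\otimes\psi)^\sharp = \varphi^\sharp\times\psi^\sharp$ on the product of directions.

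Finally I would check the coherence axioms. The symmetry, associator, and unitor constraints on both $(\polycart,0,+)$ and $(\polycart,\yon,\otimes)$ are induced by the corresponding bijections of finite disjoint unions and cartesian products of sets, and the witnessing isos constructed above are defined by exactly those same bijections; hence the hexagon/pentagon/triangle diagrams commute by naturality of set-level manipulations. The distributor $p\otimes(q_1+q_2)\iso (p\otimes q_1)+(p\otimes q_2)$ is likewise preserved by the same indexed-sum bookkeeping. The only mildly delicate point is to confirm that each isomorphism above is cartesian and natural with respect to \emph{cartesian} maps only, but this is automatic since the exponent sets match on the nose and the backwards-on-terms data is just the product/coproduct of the components, so I do not expect a genuine obstacle.
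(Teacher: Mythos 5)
Your proof is correct and follows essentially the same route as the paper's: verify $\dot{0}=0$ and $\dot{(p+q)}\cong\dot{p}+\dot{q}$ for the additive structure, then use the descriptions in \eqref{eqn.derivy} and \eqref{eqn.dir_formula} to exhibit $\dot{\yon}\yon\cong\yon$ and $\dot{(p\otimes q)}\yon\cong(\dot{p}\yon)\otimes(\dot{q}\yon)$, and finally note naturality. You simply spell out the reindexing, cartesianness, and coherence checks that the paper leaves implicit.
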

\begin{proof}
Clearly $\dot{0}=0$ and $\dot{(p+q)}\cong\dot{p}+\dot{q}$, and by multiplying both sides by $\yon$ we see that the functor $p\mapsto p\yon$ preserves the coproduct structure. There is an isomorphism $\dot{\yon}\yon\cong\yon$, and for any $p,q\in\polycart$ there is also an isomorphism $\dot{(p\otimes q)}\yon\cong(\dot{p}\yon)\otimes(\dot{q}\yon)$, as follows from \eqref{eqn.derivy} and \eqref{eqn.dir_formula}; thus $p\mapsto\dot{p}\yon$ preserves the $\otimes$-structure. All of these isomorphisms are natural in $p,q\in\polycart$, completing the proof.
\end{proof}

The following corollary is straightforward, since $\polycart$ inherits $+$ and $\otimes$ from the forgetful functor $\polycart\to\poly$.

\begin{corollary}\label{cor.D}
The functor $\T(p)\coloneqq\dot{p}\yon$ is a rig functor $\T\colon\polycart\to\poly$. 
\end{corollary}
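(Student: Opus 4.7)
The plan is to exhibit $\T$ as the composite
\[
\polycart \xrightarrow{\;p\mapsto\dot{p}\yon\;} \polycart \xrightarrow{\;\iota\;} \poly,
\]
where the first functor is the rig functor established in the preceding proposition and $\iota$ is the wide subcategory inclusion $\polycart \hookrightarrow \poly$. Since a composite of rig functors is a rig functor, it is enough to check that $\iota$ is itself a rig functor, i.e.\ that the coproduct $+$, the Dirichlet product $\otimes$, and their units $0$ and $\yon$ lift along $\iota$, and that all coherence isomorphisms are cartesian.

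For objects this is automatic because $\polycart$ is wide in $\poly$; in particular $0$ and $\yon$ lie in $\polycart$. For morphisms, I would invoke the characterization from Remark~\ref{rem.positions_and_directions}: a map $\varphi \colon p \to q$ is cartesian iff $\varphi^\sharp_I \colon q[\varphi_1 I] \to p[I]$ is a bijection for each $I$. Given cartesian maps $\varphi \colon p \to p'$ and $\psi \colon q \to q'$, the backwards-on-terms data of $\varphi + \psi$ is, per type, literally $\varphi^\sharp_I$ or $\psi^\sharp_J$, so it is a bijection; and the backwards-on-terms data of $\varphi \otimes \psi$ on a type $(I,J) \in p(1) \times q(1)$ is the cartesian product $\varphi^\sharp_I \times \psi^\sharp_J$ of two bijections, again a bijection. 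Hence $+$ and $\otimes$ restrict to $\polycart$, and $\iota$ sends them to the corresponding operations in $\poly$ on the nose. The associators, unitors, symmetry, and distributor of $(\poly, 0, +, \yon, \otimes)$ are isomorphisms, and every isomorphism in $\poly$ is cartesian (its backwards-on-terms is already an isomorphism of sets), so each such coherence morphism belongs to $\polycart$ and is preserved strictly by $\iota$.

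There is essentially no obstacle: the real work was done in the preceding proposition, and the corollary just records that the rig-functorial structure factors through the wide inclusion $\iota$, whose only job is to forget that maps are cartesian.
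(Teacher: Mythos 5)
Your proof is correct and follows the same route as the paper, which simply observes that the corollary is straightforward because $\polycart$ inherits $+$ and $\otimes$ along the inclusion into $\poly$; you have merely spelled out the details (cartesianness of $\varphi+\psi$, $\varphi\otimes\psi$, and of the coherence isomorphisms) that the paper leaves implicit.
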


\begin{remark}[Total polynomial]\label{rem.total}
Note that for any $p$ we have $(\dot{p}\yon)(1)\cong\dot{p}(1)$. We think of $\dot{p}\yon$ as the \emph{total polynomial} of $p$, akin to the total space of a bundle, where $p$ is playing the role of the base. To justify this intuition, note that $\dot{p}\yon$ comes with a ``projection'' map $\epsilon\colon\dot{p}\yon\to p$ and that a section $p\to \dot{p}\yon$ of $\epsilon$ can be identified with a section $\gamma\in\Gamma(p)$ of $p$ as a bundle; see Remark~\ref{rem.comonad,ex.pos_glob}.
\end{remark}

\begin{example}\label{ex.global_D}
For any polynomial $p$, we have
\[
\Gamma(\dot{p}\yon)\cong\prod_{I\in p(1)}p[I]^{p[I]}.
\]
This formula---which follows directly from Eq.~\ref{eqn.global,eqn.derivy}---will be relevant when connecting the category theory to Shannon entropy later on. 
\end{example}

\begin{proposition}
The category $\smset\times\smset\op$ has a distributive monoidal structure:
\begin{align}
  (A_1,B_1)+(A_2,B_2)&\coloneqq(A_1+A_2\,,\,B_1B_2)
  \label{eqn.sum_geomean}\\\label{eqn.prod_prod}
  (A_1,B_1)\otimes(A_2,B_2)&\coloneqq(A_1A_2\,,\,B_1^{A_2}B_2^{A_1})
\end{align}
The units are $(0,1)$ and $(1,1)$ respectively.
\end{proposition}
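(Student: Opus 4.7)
My plan is to verify the three claims in turn: that $+$ is symmetric monoidal with unit $(0,1)$, that $\otimes$ is symmetric monoidal with unit $(1,1)$, and that $\otimes$ distributes over $+$.

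For $+$, I observe that $(A_1 + A_2,\, B_1 B_2)$ is precisely the categorical coproduct in $\smset \times \smset\op$: coproducts in $\smset\op$ are products in $\smset$, which act as $B_1 \times B_2 = B_1 B_2$ on underlying sets, and the initial object of $\smset\op$ is the terminal object $1$ of $\smset$. So \eqref{eqn.sum_geomean} describes the coproduct in $\smset \times \smset\op$ with initial object $(0,1)$, and the symmetric monoidal structure comes for free.

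For $\otimes$, I first extend the formula to morphisms. Given $f_i \colon A_i \to A_i'$ in $\smset$ and $g_i \colon B_i' \to B_i$ in $\smset$ for $i = 1, 2$, the tensor acts by $f_1 \times f_2$ on first components and by $g_1^{f_2} \times g_2^{f_1} \colon B_1'^{A_2'} B_2'^{A_1'} \to B_1^{A_2} B_2^{A_1}$ on second components, using bifunctoriality of exponentiation. The associator, unitor, and braiding are built from standard isomorphisms in $\smset$: associativity on second components reduces to
\[
(B_1^{A_2} B_2^{A_1})^{A_3} B_3^{A_1 A_2} \cong B_1^{A_2 A_3} B_2^{A_1 A_3} B_3^{A_1 A_2} \cong B_1^{A_2 A_3} (B_2^{A_3} B_3^{A_2})^{A_1}
\]
via $(BC)^A \cong B^A C^A$ and $(B^A)^C \cong B^{AC}$; the unitor uses $1^A \cong 1$ and $B^1 \cong B$; the braiding is the obvious swap. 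The pentagon, triangle, and hexagon diagrams commute because they are assembled from coherence diagrams for these canonical isomorphisms in $\smset$.

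For distributivity, I compute
\begin{align*}
(A_1, B_1) \otimes ((A_2, B_2) + (A_3, B_3))
&= \bigl(A_1(A_2 + A_3),\; B_1^{A_2 + A_3} (B_2 B_3)^{A_1}\bigr) \\
&\cong \bigl(A_1 A_2 + A_1 A_3,\; B_1^{A_2} B_1^{A_3} B_2^{A_1} B_3^{A_1}\bigr) \\
&= \bigl((A_1, B_1) \otimes (A_2, B_2)\bigr) + \bigl((A_1, B_1) \otimes (A_3, B_3)\bigr),
\end{align*}
using distributivity of $\times$ over $+$ in $\smset$ together with $B^{A+A'} \cong B^A B^{A'}$ and $(BC)^A \cong B^A C^A$. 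The main obstacle is simply the bookkeeping of $\otimes$-coherence, but since every constraint is assembled from canonical isomorphisms between products, coproducts, exponentials, and units in $\smset$, the required diagrams commute by coherence of these basic isomorphisms.
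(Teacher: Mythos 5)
Your proposal is correct and follows essentially the same route as the paper: identify $+$ with the categorical coproduct of $\smset\times\smset\op$, verify functoriality, symmetry, unitality, and associativity of $\otimes$ via the standard exponential isomorphisms $(BC)^A\cong B^AC^A$ and $(B^A)^C\cong B^{AC}$, and check distributivity by the same direct computation. You are somewhat more explicit than the paper about the action of $\otimes$ on morphisms and about which canonical isomorphisms furnish the coherence data, which the paper leaves ``to the interested reader.''
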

\begin{proof}
Coproducts in $\smset\op$ are products in $\smset$, justifying the first line; these clearly form a symmetric monoidal structure. For the $\otimes$-monoidal structure, note that the formula is functorial in $A\in\smset$ and $B\in\smset\op$. It is also symmetric as well as unital: $(1,1)\otimes(A_2,B_2)\cong(A_2,B_2)$. Associativity is justified as follows:
\begin{align*}
  (A_1,B_1)\otimes((A_2,B_2)\otimes(A_3,B_3))&\cong
  %(A_1(A_2A_3),B_1^{A_2A_3}(B_2^{A_3}B_3^{A_2})^{A_1})\\\cong
	(A_1A_2A_3,B_1^{A_2A_3}B_2^{A_1A_3}B_3^{A_1A_2})\\&\cong
	((A_1,B_1)\otimes(A_2,B_2))\otimes(A_3,B_3).
\end{align*}
There is an absorption map $(0,1)\otimes (A,B)\cong(0,B)\to(0,1)$, and the distributivity of $\otimes$ over $+$ is justified as follows:
\begin{align*}
  (A,B)\otimes\big((A_1,B_1)+(A_2,B_2)\big)&\cong 
  \big(A(A_1+A_2),B^{A_1+A_2}(B_1B_2)^A\big)\\&\cong
  \big(AA_1+AA_2,B^{A_1}B^{A_2}B_1^AB_2^A\big)\\&\cong
  \big((A,B)\otimes(A_1,B_1))+((A,B)\otimes(A_2,B_2)\big).
\end{align*}
We leave the remaining details to the interested reader.
\end{proof}

\begin{remark}[Formal roots and rectangular aspect]\label{rem.geomean}
One can think of an object $(A,B)\in\smset\times\smset\op$ as formally representing the $A$th root of $B$, i.e.\ the number $\sqrt[A]{B}=B^{\frac{1}{A}}$, keeping track of the base $A$ as well. It is helpful to think of $(A,B)$ as a rectangle with length $A$ and width $\sqrt[A]{B}$. From this perspective, the sum from \eqref{eqn.sum_geomean} adds the lengths and takes the geometric mean of the widths, and the monoidal product from \eqref{eqn.prod_prod} takes the product of both lengths and widths:
%\[
%(B_1B_2)^\frac{1}{A_1+A_2}=\left(\left(B_1^\frac{1}{A_1}\right)^{A_1}\times \left(B_2^\frac{1}{A_2}\right)^{A_2}\right)^{\frac{1}{A_1+A_2}}
%\qqand
%(B_1^{A_2}B_2^{A_1})^\frac{1}{A_1A_2}=B_1^\frac{1}{A_1}B_2^\frac{1}{A_2}.
%\]
\[
(B_1B_2)^\frac{1}{A_1+A_2}=\left(\left(\sqrt[A_1]{B_1}\right)^{A_1}\times \left(\sqrt[A_2]{B_2}\right)^{A_2}\right)^{\frac{1}{A_1+A_2}}
\qqand
(B_1^{A_2}B_2^{A_1})^\frac{1}{A_1A_2}=\sqrt[A_1]{B_1}\sqrt[A_2]{B_2}.
\]

For any polynomial $p$, the functor $\R(p)\coloneqq(p(1),\Gamma(p))$ from \eqref{eqn.fundamental} is consonant with this interpretation. We may say that $\R(p)$ denotes the \emph{rectangular aspect} of $p$ in the sense that $p(1)$ represents the length and $\sqrt[p(1)]{\Gamma(p)}$, the geometric mean of the fiber cardinalities, represents the width. For example, the polynomial $p=\yon^4+4\yon$, depicted in Diagram~\eqref{eqn.empirical}, has length $p(1)=5$ and width $\sqrt[5]{4}\approx1.3$.
\end{remark}

\begin{remark}
The $\otimes$ operation \eqref{eqn.prod_prod} on $\smset\times\smset\op$ in fact has a closure
\[
[(A_1,B_1),(A_2,B_2)]\coloneqq\left(A_2^{A_1}B_1^{B_2}\,,\,A_1B_2\right).
\]
We will not need this, but it is interesting that $\smset\times\smset\op$ has so much structure.
\end{remark}

\begin{proposition}\label{prop.W}
The functor $\R\colon\poly\to\smset\times\smset\op$ from \eqref{eqn.fundamental} is a rig functor.
\end{proposition}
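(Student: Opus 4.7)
The plan is to check, using the explicit product formula $\Gamma(p) \cong \prod_{I \in p(1)} p[I]$ from \eqref{eqn.global}, that $\R$ respects both monoidal structures and their units up to natural isomorphism, and then to note that the rig coherences reduce to elementary set-theoretic identities. Since $\R$ is assembled from the representable functor $(-)(1)$ and the corepresentable functor $\Gamma$, any isomorphism exhibited by hand is automatically natural in its polynomial arguments.

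For the additive part, I will identify the unit by $0(1) = \emptyset$ and $\Gamma(0) = 1$ (empty product), matching $(0,1)$, and the binary part by the chain
\[
\Gamma(p+q) \cong \prod_{I \in p(1)} p[I] \;\times\; \prod_{J \in q(1)} q[J] \cong \Gamma(p) \times \Gamma(q),
\]
together with the obvious $(p+q)(1) \cong p(1) + q(1)$; note that the second coordinate lives in $\smset\op$, where $\times$ is the coproduct, so this indeed matches \eqref{eqn.sum_geomean}. For the Dirichlet part, the unit is handled by $\yon(1) = 1 = \Gamma(\yon)$, matching $(1,1)$, while for the binary case I will use the description $(p \otimes q)[I,J] = p[I] \times q[J]$ from \eqref{eqn.dir_formula} and shuffle products to compute
\[
\Gamma(p \otimes q) \cong \prod_{(I,J)} \bigl(p[I] \times q[J]\bigr) \cong \Gamma(p)^{q(1)} \times \Gamma(q)^{p(1)},
\]
which is exactly the second coordinate of \eqref{eqn.prod_prod}.

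The main obstacle, such as it is, will be checking that these isomorphisms are coherent with the associators, unitors, symmetries, and distributor. I expect the distributor to be the most delicate: tracing both sides for $p \otimes (q_1 + q_2)$, the second coordinate of $\R(p \otimes (q_1 + q_2))$ unpacks as $\Gamma(p)^{q_1(1) + q_2(1)} \times (\Gamma(q_1)\Gamma(q_2))^{p(1)}$, while the second coordinate of $\R(p \otimes q_1) + \R(p \otimes q_2)$ is $\Gamma(p)^{q_1(1)} \Gamma(q_1)^{p(1)} \times \Gamma(p)^{q_2(1)} \Gamma(q_2)^{p(1)}$, and matching them comes down to the exponential laws $X^{A+B} \cong X^A \times X^B$ and $(YZ)^A \cong Y^A \times Z^A$. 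These are precisely the identities underlying the rig structure on $\poly$ itself, so no genuinely new phenomenon arises; each coherence pentagon or hexagon commutes because the corresponding diagram commutes in $\smset$ for the underlying sets of types and terms. The proof will therefore amount to organizing the bookkeeping around the two displayed isomorphisms above.
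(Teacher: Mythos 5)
Your proposal is correct and follows essentially the same route as the paper: identify the units via $0(1)=0$, $\Gamma(0)=1$, $\yon(1)=1=\Gamma(\yon)$, use $\Gamma(p+q)\cong\Gamma(p)\times\Gamma(q)$ for the additive structure, and reduce the multiplicative structure to the product-shuffling isomorphism $\Gamma(p\otimes q)\cong\Gamma(p)^{q(1)}\times\Gamma(q)^{p(1)}$. Your additional remarks on naturality (via representability of $(-)(1)$ and corepresentability of $\Gamma$) and on the coherence with the distributor are more explicit than the paper, which leaves those checks implicit, but the substance is the same.
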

\begin{proof}
Recall from \eqref{eqn.fundamental} that $\R(p)\coloneqq(p(1),\Gamma(p))$. Clearly $0(1)=0$ and $(p+q)(1)\cong p(1)+q(1)$. Also $\Gamma(0)=1$ and $\Gamma(p+q)\cong\Gamma(p)\times\Gamma(q)$; hence $\R$ preserves the $(0,+)$ monoidal structure. Moreover, we have $\yon(1)=1$ and $(p\otimes q)(1)\cong p(1)\times q(1)$ and $\Gamma(\yon)= 1$, so to show that $\R$ preserves the $(\yon,\otimes)$ monoidal structure, it remains only to provide an isomorphism 
\[
  \Gamma(p\otimes q)\cong \Gamma(p)^{q(1)}\times\Gamma(q)^{p(1)}.
\]
It is given as follows:
\begin{align*}
	\Gamma(p\otimes q)&\cong
	\prod_{(I,J)\in p(1)\times q(1)}p[I]q[J]\\&\cong
	\left(\prod_{(I,J)\in p(1)\times q(1)}p[I]\right)\times
		\left(\prod_{(I,J)\in p(1)\times q(1)}q[J]\right)\\&\cong
	\prod_{J\in q(1)}\prod_{I\in p(1)}p[I]\times\prod_{I\in p(1)}\prod_{J\in q(1)}q[J]\\&\cong
	\Gamma(p)^{q(1)}\times\Gamma(q)^{p(1)}	
\end{align*}
\end{proof}

%\begin{remark}[Products on $\smset\times\smset\op$]
%In fact $\smset\times\smset\op$ has categorical products: the unit is $(1,0)$ and the product is
%\[(A_1,B_1)\times(A_2,B_2)\coloneqq(A_1A_2,B_1+B_2).\]
%All functors between categories with products have an op-lax structure, hence $\R\colon(\poly,1,\times)\to(\smset\times\smset\op,(1,0),\times)$ does too:
% \[
% \R(1)\cong(1,0)\qqand\R(p\times q)\to\R(p)\times\R(q).
% \]
% But $R$ is no better-behaved than that, and since $\dot{p}$ only satisfies a Leibniz rule with respect to product (see \cref{rem.leibniz})---i.e.\ since it does not actually \emph{preserve} products---we see no way to connect the $\times$ structure on $\poly$ or $\smset\times\smset\op$ to entropy.
%\end{remark}

We summarize the above section before we go on to the final one. Namely, the functors $\T\colon\polycart\to\poly$ and $\R\colon\poly\to\smset\times\smset\op$ from Corollary~\ref{cor.D,prop.W} compose to form a rig functor $\hh\coloneqq \R\circ \T$ given by
\begin{equation}\label{eqn.entropy_data}
\begin{aligned}
	\polycart&\To{\mathcal{h}}\smset\times\smset\op\\
	p&\mapsto (\dot{p}(1),\Gamma(\dot{p}\yon)).
\end{aligned}
\end{equation}

We refer to $\hh(p)\in\smset\times\smset\op$ as \emph{the categorical entropy} of the polynomial $p$. This pair of sets leaves behind any semblance of the probability distribution associated with $p$, but it retains the data necessary to compute $p$'s entropy---as we'll see in Theorem~\ref{thm.main}---and it is rig-functorial in $p$. 

\subsection{Shannon entropy}\label{sec.entropy}

Writing $\log$ to denote $\log_2$, we define a partial function $L\colon\ob(\smset\times\smset\op)\to\rr$ by
\begin{equation}\label{eqn.L}
(A,B)\mapsto \log A-\frac{\log B}{A}.
\end{equation}
Equivalently, $L(A,B)=\log A-\log\sqrt[A]{B}$. When $A=0$ and $B=1$, we define this function to be $L(0,1)\coloneqq 0$; for all cases where $A=0$, or $B=0$, or either $A$ or $B$ is infinite, we leave $L(A,B)$ undefined. We will be only interested in this map when it is composed with the categorical entropy $\mathcal{h}$ from \eqref{eqn.entropy_data}, and Lemma~\ref{lemma.dontworry} below says that we do not need to worry about the undefined cases.

\begin{lemma}\label{lemma.dontworry}
Let $p\in\polycart$ with categorical entropy $(A,B)\coloneqq\hh(p)$, and suppose that $\card\dot{p}(1)<\infty$. Then we have that
\begin{enumerate}[label=\roman*.]
	\item $B\neq 0$,
	\item if $A=0$ then $B=1$, and
	\item both $A$ and $B$ are finite.
\end{enumerate}
\end{lemma}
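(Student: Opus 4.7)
The plan is to unpack the definition of $\hh(p)$ via the formulas established earlier, namely $A = \dot{p}(1) \cong \sum_{I \in p(1)} p[I]$ and, by Example~\ref{ex.global_D}, $B = \Gamma(\dot{p}\yon) \cong \prod_{I \in p(1)} p[I]^{p[I]}$. All three claims then reduce to elementary facts about sums and products of sets indexed by $p(1)$. The hypothesis $\#\dot{p}(1) < \infty$ will be used to pin down which indices $I$ can have $p[I] \neq \emptyset$ and how large they can be.

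For (i), I would observe that each factor $p[I]^{p[I]}$ is nonempty: if $p[I] = \emptyset$ then $p[I]^{p[I]} = \emptyset^\emptyset$ is the singleton (the empty function), and if $p[I] \neq \emptyset$ then $p[I]^{p[I]}$ contains at least the identity function. Since a product of sets is empty iff some factor is empty, $B \neq 0$. For (ii), $A = 0$ forces $\sum_{I \in p(1)} p[I] = 0$, so $p[I] = \emptyset$ for every $I \in p(1)$; then every factor in $B$ is $\emptyset^\emptyset = 1$, and the product of singletons is the singleton, giving $B = 1$.

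For (iii), $A$ is finite by hypothesis. For $B$, the finiteness of $\sum_{I \in p(1)} p[I]$ implies two things: first, $p[I]$ is finite for every $I$; second, the set $S \coloneqq \{I \in p(1) : p[I] \neq \emptyset\}$ is finite (otherwise the sum would be infinite). Since all factors indexed by $p(1) \setminus S$ equal $1$, the product collapses to $B \cong \prod_{I \in S} p[I]^{p[I]}$, a finite product of finite sets, hence finite.

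The argument is essentially bookkeeping, so I do not expect a real obstacle; the only mildly subtle point is the convention $\emptyset^\emptyset = 1$, which is exactly what makes (i) and (ii) hold and which I would mention explicitly when invoking it.
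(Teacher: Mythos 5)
Your proposal is correct and follows essentially the same route as the paper: both unpack $A=\dot{p}(1)$ and $B=\Gamma(\dot{p}\yon)\cong\prod_{I\in p(1)}p[I]^{p[I]}$ and reduce each claim to elementary observations about the indexed sum and product (each factor of $B$ is nonempty, all factors are singletons when $A=0$, and only finitely many factors are non-singletons when $A$ is finite). Your explicit handling of the $\emptyset^\emptyset=1$ convention is a slightly more careful spelling-out of what the paper states in one line, but it is the same argument.
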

\begin{proof}
By definition of $\hh$, we have that $A\coloneqq\dot{p}(1)$ and $B\coloneqq\Gamma(\dot{p}\yon)$. 
\begin{enumerate}[label=\roman*.]
	\item One easily checks using \eqref{eqn.global} that for any $q\in\poly$, the set $\Gamma(q\yon)\neq0$ is nonempty since every $(q\yon)$-type has at least one term. 
	\item If $\dot{p}(1)=0$ then $p\in\smset$ is constant, so $\dot{p}\yon=0$ as well, and $\Gamma(0)=1$ by \eqref{eqn.global}.
	\item By assumption $\card A=\card\dot{p}(1)<\infty$. For $B$, note that there are only a finite number of $I\in p(1)$ for which $p[I]$ is nonempty, so by \eqref{eqn.global} and \eqref{eqn.dotp} the set $\Gamma(\dot{p}\yon)$ is finite.
\qedhere
\end{enumerate}
\end{proof}

\begin{remark}[Log aspect ratio]\label{rem.log_aspect_ratio}
With the interpretation of an object $(A,B)\in\smset\times\smset\op$ as a rectangle with length $A$ and width $\sqrt[A]{B}$, as in Remark~\ref{rem.geomean}, we can think of $L(A,B)=\log A -\log\sqrt[A]{B}$ as its \emph{log aspect ratio}, the log of its length divided by its width. This is a quantity that has come up in the study of vision \cites{talbot2011arc,dickinson2017separate}, though we're making no claim about whether this connection is meaningful.
\end{remark}

\begin{definition}[Empirical distribution]\label{def.empirical}
Let $p\neq 0$ be a nonzero polynomial and suppose that the cardinality of $\dot{p}(1)\in\smset$ is finite, $\card\dot{p}(1)<\infty$. We define the \emph{empirical distribution defined by $p$} to be the following function $P\colon p(1)\to[0,1]$:
\[
 P(I)\coloneqq\frac{\card p[I]}{\card\dot{p}(1)}
\]
Note that $1=\sum_{I\in p(1)}P(I)$, so $P$ is indeed a probability distribution.
\end{definition}

\begin{remark}\label{rem.poly_prod_monoidal}
One may ask how to view $\poly$'s monoidal structures, especially $+$ and $\otimes$, under the correspondence from Definition~\ref{def.empirical}. Suppose given polynomials $p, q\in\poly$ with associated probability distributions $P_p$ and $P_q$. For Dirichlet product we have
\[
P_{p\otimes q}=P_p\otimes P_q
\]
where the left-hand side is the probability distribution associated to $p\otimes q$ and the right-hand side is the usual tensor (independent) product of probability distributions. For sums we have
\[
P_{p+q}=\frac{\dot{p}(1)}{\dot{p}(1)+\dot{q}(1)}P_p + \frac{\dot{q}(1)}{\dot{p}(1)+\dot{q}(1)}P_q
\]
the convex combination of $P_p$ and $Q_q$, weighted according to the relative number of draws $\dot{p}(1)$ and $\dot{q}(1)$ in each.
\end{remark}

%\begin{example}[Outcomes, samples, and draws]
%Consider an experiment consisting of a coin; it has two outcomes, heads and tails. In this situation a sample consists of a set of draws, i.e.\ coin-flips. The polynomial $p$ corresponds to the sample: the set of outcomes is $p(1)$---in this case $p(1)=\{\text{heads, tails}\}$ has two elements---and for each outcome $i\in p(1)$ the set of draws with outcome $i$ is $p[i]$. The set $\dot{p}(1)$ is the total number of observations.
%\end{example}
%
Recall that the Shannon entropy $H(P)$ of a probability distribution $P\colon X\to[0,1]$ is given by
\[
H(P)\coloneqq-\sum_{x\in X}P(x)\log P(x).
\]

The following theorem could be summarized as follows: ``thinking of $p\in\polycart$ as a statistical sample, the entropy $H(P)$ of the corresponding probability distribution $P$ is equal to the log ratio of the rectangular aspect of $p$'s total polynomial''; see Remarks~\ref{rem.total},~\ref{rem.geomean},~and~\ref{rem.log_aspect_ratio}.

\begin{theorem}\label{thm.main}
Let $p\neq0$ be a nonzero polynomial with $\card\dot{p}(1)<\infty$, and let $P$ be the empirical distribution defined by $p$. Then the following equation holds
\[
H(P)=L(\hh(p))
\]
where $H$ is the Shannon entropy and $L,\hh$ are as defined in Eqs.~\eqref{eqn.entropy_data}~and~\eqref{eqn.L}.
\end{theorem}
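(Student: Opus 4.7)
The plan is to unpack both sides of the claimed equality as explicit sums over $p(1)$ and show they coincide by a short algebraic manipulation. Concretely, write $p=\sum_{I\in p(1)}\yon^{p[I]}$ and abbreviate $n_I\coloneqq\#p[I]$ and $N\coloneqq\#\dot{p}(1)$; the finiteness hypothesis and Lemma~\ref{lemma.dontworry} ensure everything in sight is a finite, well-defined quantity (with the standard convention $0\log 0=0$, which matches $\log(0^0)=\log 1=0$).

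First I would expand the right-hand side. By definition $\hh(p)=(\dot{p}(1),\Gamma(\dot{p}\yon))$, and from \eqref{eqn.dotp} we immediately get $N=\sum_{I\in p(1)} n_I$. Applying Example~\ref{ex.global_D} gives
\[
\Gamma(\dot{p}\yon)\cong\prod_{I\in p(1)} n_I^{n_I},
\]
so that
\[
L(\hh(p))=\log N-\frac{1}{N}\sum_{I\in p(1)} n_I\log n_I.
\]

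Next I would expand the left-hand side. By Definition~\ref{def.empirical}, $P(I)=n_I/N$, so
\[
H(P)=-\sum_{I\in p(1)}\frac{n_I}{N}\log\frac{n_I}{N}
=-\frac{1}{N}\sum_{I\in p(1)} n_I\log n_I+\frac{\log N}{N}\sum_{I\in p(1)} n_I
=\log N-\frac{1}{N}\sum_{I\in p(1)} n_I\log n_I,
\]
using $\sum_I n_I=N$ in the last step. Comparing with the previous display yields $H(P)=L(\hh(p))$.

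Since the argument reduces to a one-line calculation, there is no real obstacle; the only thing worth checking carefully is that the degenerate summands (types $I$ with $p[I]=\emptyset$) contribute $0$ on both sides — on the entropy side by the convention $0\log 0=0$, and on the rectangular side because $n_I^{n_I}=1$ when $n_I=0$ so such factors drop out of $\log\Gamma(\dot{p}\yon)$. All of the heavy lifting — namely that $\hh$ is defined and rig-functorial, and that its second coordinate on $p$ really is $\prod_I n_I^{n_I}$ — has already been done in Corollary~\ref{cor.D}, Proposition~\ref{prop.W}, and Example~\ref{ex.global_D}, so the theorem is now essentially a bookkeeping statement confirming that the categorical construction $\hh$ packages precisely the data needed to recover Shannon entropy via $L$.
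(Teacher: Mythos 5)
Your proof is correct and follows essentially the same route as the paper's: both sides reduce to $\log N-\frac{1}{N}\sum_I n_I\log n_I$ via the identity $\Gamma(\dot{p}\yon)\cong\prod_I p[I]^{p[I]}$ from Example~\ref{ex.global_D}, exactly as in the paper's computation. Your extra remark that types with $p[I]=\emptyset$ contribute $0$ on both sides (via $0\log 0=0$ and $0^0=1$) is a small point of care the paper leaves implicit, but it does not change the argument.
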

\begin{proof}
We need to show that the following holds:
\[
H(P)=\log\dot{p}(1)-\frac{\log\Gamma(\dot{p}\yon)}{\dot{p}(1)}.
\]
With the fact $\Gamma(\dot{p}\yon)\cong\prod_{I\in p(1)}p[I]^{p[I]}$ from Example~\ref{ex.global_D} in hand, this is a routine calculation:
\begin{align*}
	H(P)&\coloneqq
	-\sum_{I\in p(1)}\frac{\card p[I]}{\card\dot{p}(1)}\log \frac{\card p[I]}{\card\dot{p}(1)}\\&=
	\frac{1}{\card\dot{p}(1)}\sum_{I\in p(1)}\card p[I]\big(\log\card\dot{p}(1)-\log\card p[I]\big)\\&=
	\frac{1}{\card\dot{p}(1)}\left(\card\dot{p}(1)\log\card\dot{p}(1)-\log\prod_{I\in p(1)}\card p[I]^{\card p[I]}\right)\\&=
	\log\card\dot{p}(1)-\frac{\log\Gamma(\dot{p}\yon)}{\dot{p}(1)}
\end{align*}
\end{proof}

\begin{example}[Entropy of a uniform distribution]
It is well-known and easy to calculate that if $P$ is a uniform distribution on $A$ elements, then $H(P)=\log(A)$. There are many samples that correspond to $P$; what differs are their sample sizes. The sample in which $AB$-many observations are taken---each outcome occurring $B$-many times---corresponds to the polynomial $A\yon^B$. 

Our formula for entropy needs to agree, and it does. The rectangular aspect of the total polynomial is $\hh(p)\cong(AB,B^{AB})$: length $AB$ and width $B=\sqrt[AB]{B^{AB}}$, so its log aspect ratio is
\[
L(\hh(p))=\log(AB)-\frac{\log(B^{AB})}{AB}=\log A.
\qedhere
\]
\end{example}

\printbibliography 
\end{document}